\numberwithin{equation}{section}
\newtheorem{lem}{Лемма}[section]
\newtheorem{thm}{Теорема}[section]
\newtheorem{rim}{Замечание}[section]
\newtheorem{hyp}{Гипотеза}[section]
\newcommand{\Wo}{{\raisebox{0.2ex}{\(\stackrel{\circ}{W}\)}}{}}
\title[]{Явный вид  экстремалей в задаче о  константах вложения в пространствах Соболева.}
\author{Т.А.Гарманова, И.А.Шейпак
\address{Московский государственный университет
им.~М.~В.~Ломоносова, механико-математический факультет}
\email{garmanovata@gmail.com, iasheip@yandex.ru}
}
\thanks{Результаты разделов 2 и 3 получены при поддержке гранта РФФИ № 19-01-00240, результаты разделов 4--6 получены при  поддержке гранта РНФ №~17-11-01215.}
\begin{document}
\noindent УДК~517.518.23, 517.984
\begin{abstract}
В статье рассматриваются константы вложения соболевских пространств $\Wo^n_2[0;1]\hookrightarrow
\Wo^k_2[0;1]$ ($0\leqslant k\leqslant n-1$). Указана связь констант вложения с нормами функционалов $f\mapsto f^{(k)}(a)$ в пространстве $\Wo^n_2[0;1]$. Найден явный вид функций $g_{n,k}\in\Wo^n_2[0;1]$, на которых рассматриваемые  функционалы достигают своей нормы. Эти же функции являются экстремальными для констант вложения. Исследуется связь констант вложения с полиномами Лежандра. Подробно изучены свойства констант вложения при $k=3$ и $k=5$: получены явные формулы для точек экстремума, определены точки глобального максимума и найдены значения точных констант вложения. Установлена связь между константами вложения и некоторым классом спектральных задач с коэффициентами-распределениями.
\end{abstract}
\maketitle

{\small
\textbf{Ключевые слова: }\textit{Пространства Соболева, константы вложения, полиномы Лежандра}

\textbf{Key words: }\textit{Sobolev spaces, embedding constants, Legendre polinomials}

\section{Введение}
Мы будем рассматривать пространство Соболева $\Wo_2^n[0;1]$, состоящее  из всех функций $f$, все производные которых до порядка $n-1$ абсолютно непрерывны на отрезке $[0;1]$, и $f^{(n)}\in L_2[0;1]$, а также выполнены краевые условия
$$
f^{(j)}(0)=f^{(j)}(1)=0,\quad j=0,1,\ldots, n-1.
$$

Введем обозначение $\mathcal H:=\Wo_2^n[0;1]$. Норма в пространстве  $\mathcal H$ определяется равенством
$$
\|f\|_{\mathcal H}:=\left(\int_0^1\left|f^{(n)}(x)\right|^2dx\right)^{1/2}.
$$

Для фиксированной точки $a\in(0;1)$ и целого числа $k\in\{0,1,\ldots, n-1\}$ рассмотрим задачу о нахождении явных выражений для величин

\begin{equation}\label{eq:ekstr}
A_{n,k}(a):=\sup\left\{|f^{(k)}(a)|:\quad \|f\|_{\mathcal H}\leqslant 1\right\}.
\end{equation}

Другими словами, числа $A^2_{n,k}(a)$ представляют собой наименьшие возможные константы в неравенствах
\begin{equation}\label{eq:MFK}
\left|f^{(k)}(a)\right|^2\leqslant A^2_{n,k}(a)\|f\|^2_{\mathcal H}, \quad f\in\mathcal H.
\end{equation}

Величина
\begin{equation}\label{eq:const}
\Lambda_{n,k}:=\sup_{a\in[0;1]} A_{n,k}(a)
\end{equation}
равна норме оператора вложения $\mathcal H\to \Wo^k_\infty[0;1]$. Число $\Lambda_{n,k}$ также называют \textit{точной константой вложения.}

Задачи об оценках производных функций в различных функциональных пространствах привлекали интерес многих математиков. А.\,А.\,Марковым была решена задача о нахождении наибольшего значения производной многочлена на отрезке $[-1;1]$ среди всех многочленов, ограниченных по модулю единицей. Неравенства для производных в пространствах $L_p(\mathbb{R})$ или $L_p(\mathbb{R}^+)$ называют неравенствами колмогоровского типа. Эти неравенства естественно возникают в теории приближений, в частности в задаче Стечкина о приближении неограниченных операторов. В работе \cite{Kalyab} неравенства вида \eqref{eq:MFK} названы неравенствами типа Маркова--Фридрихса--Колмогорова.

Вопросам вычисления точных констант в неравенствах колмогоровского типа для промежуточных производных в различных случаях посвящены многие работы. См., например, монографии \cite{Tikh}, \cite{MagTikh}, в которых дан исторический обзор и приведена обширная библиография по данной тематике.

В работе \cite{Kalyab} получена рекуррентная формула, связывающая величину  $A^2_{n,k}(a)$ с величиной $A^2_{n-k,0}(a)$ и значениями специально подобранных первообразных полиномов Лежандра в точке $a$. С помощью этой формулы получены явные выражения для $\Lambda^2_{n,0}$,  $\Lambda^2_{n,1}$ и $\Lambda^2_{n,2}$.

В работе \cite{NazM} вычислены константы $\Lambda^2_{n,4}$ и $\Lambda^2_{n,6}$. Заметим, что при больших $k$ и $n$ трудности, связанные с  вычислением величин $A^2_{n,k}$ значительно возрастают. При этом задача о нахождении точных констант вложения при нечетных значениях $k$ существенно сложнее, чем при четных $k$. Изучение величин $\Lambda^2_{n,k}$ и других констант вложения важно не только с точки зрения теории аппроксимации в  различных пространств Соболева, но и представляет интерес для спектральной теории операторов.

В \cite{BotWid}   исследована связь норм операторов вложения $\Wo^n_2[-1;1]\to \Wo^k_2[-1;1]$ c асимптотическим поведением первого (наименьшего) собственного значения $\lambda_{1,n}$ при $n\to\infty$ задачи
$$
(-1)^ny^{(2n)}=\lambda y, \quad y^{(j)}(-1)=y^{(j)}(1)=0,\quad j=0,1,\ldots,n-1
$$
и для $\lambda_{1,n}$ получены двусторонние оценки, отношение которых стремится к бесконечности.
В работе \cite{Kalyab} с помощью  явного вида $\Lambda^2_{n,0}$ получены существенно более точные по сравнению с результатами работы \cite{BotWid} двусторонние \textit{сближающиеся} оценки для $\lambda_{1,n}$ при $n\to \infty$.

В \cite{Kalyab} выдвинута гипотеза, что при четных $k$ максимум $A^2_{n,k}$ достигается в середине отрезка, а при нечетных $A^2_{n,k}$ имеет в середине отрезка локальный минимум. Также в этой работе ставится вопрос о вычислении точных значений $\Lambda^2_{n,k}$ при $k\geqslant 3$ (частично решенный в \cite{NazM}). \textit{Экстремалью} или \textit{экстремальной} называется функция, для которой в неравенствах \eqref{eq:MFK} достигается равенство. В работе \cite{Kalyab}  указывается, что на данный момент  не имеется конструктивного описания экстремальных сплайнов, реализующих максимальное значение $|f^{(k)}(a)|$ при $f\in \mathcal H$ даже при $k=0$. В работе \cite{NazM} подробно исследуются свойства экстремалей задач вложения  и показано, что при нечетных $k<n$ экстремаль не обладает симметрией относительно середины отрезка, а при всех четных $k<n$ четная относительно середины отрезка функция дает локальный максимум задачи \eqref{eq:ekstr}. Также в \cite{NazM} предъявлены явные формулы для $\Lambda^2_{n,4}$ и $\Lambda^2_{n,6}$, но они не верны. Причиной ошибки, скорее всего, является возрастающая с увеличением $n$ и $k$ сложность вычислений.

Данная работа преследует несколько целей. 1) Получить явные формулы экстремальных сплайнов, реализующих максимальное значение $|f^{(k)}(a)|$ на классе $\mathcal H$. 2) Получить новую рекуррентную формулу для функций $A^2_{n,k}$, позволяющую вычислять  константы вложения существенно более просто, нежели позволяют методы работ \cite{Kalyab} и \cite{NazM}. 3) Получить явные формулы для $A^2_{n,k}(a)$ при $k=3$, $k=5$ и найти их глобальный максимум по переменной $a$.
4) Указать связь величин $A^2_{n,k}(a)$ с другим, нежели рассмотренным в работах \cite{Kalyab} и \cite{BotWid}, классом спектральных задач.

Отметим, что результаты работ \cite{Kalyab} и \cite{NazM} относятся к вычислению точных констант на отрезке $[-1;1]$. Нам более удобно провести аналогичные исследования на отрезке $[0;1]$. Это связано во-первых, с некоторыми спектральным задачами, которые традиционно рассматривают на отрезке $[0;1]$, а во-вторых, позволит не учитывать в вычислениях степени двойки, возникающие в нормировке стандартных полиномов Лежандра.  В \S \ref{sec:BP} мы укажем связь констант вложения с некоторым классом граничных задач с весом-распределением и приведем простой метод, позволяющий пересчитывать значения $A^2_{n,k,[0;1]}$  на отрезке $[0;1]$, в $A^2_{n,k,[-1;1]}$ --- функций, определенных на отрезке  $[-1;1]$.

Структура работы следующая. В \S 2 указывается связь между величинами $A^2_{n,k}(a)$ и нормами функционалов $f\mapsto f^{(k)}(a)$ в пространстве $\Wo^{n}_2[0;1]$ ($0\leqslant k\leqslant n-1$, $a\in[0;1]$), а также доказывается теорема о явном виде сплайнов, на котором указанные функционалы достигают своей нормы. В третьем разделе получены некоторые вспомогательные свойства первообразных полиномов Лежандра и функций  $A^2_{n,k}(a)$. В \S 4 и 5 получены точные значения констант вложения $\Lambda^2_{n,3}$ и $\Lambda^2_{n,5}$ соответственно. В \S 6 приведен класс спектральных задач тесно связанных с константами вложения, а также указан способ пересчета констант вложения, определенных на отрезке $[0;1]$, в константы вложения, определенные на отрезке $[-1;1]$.

На протяжении всей статьи через $\mathcal H$ мы обозначаем пространство  $\Wo^n_2[0;1]$, т.е. пространство функций $y\in W^n_2[0;1]$, удовлетворяющих краевым условиям  $y^{j}(0)=y^{j}(1)=0$, $j=0,1,\ldots,n-1$.

Через $P_n$ мы обозначаем смещенные полиномы Лежандра, образующие ортогональный базис в пространстве $L_2[0;1]$, и определяемые формулой Родрига
$$
P_n(x):=\dfrac{1}{n!}\bigl((x^2-x)^n\bigr)^{(n)}.
$$
Также нам понадобятся первообразные полиномов Лежандра. Первообразная порядка $m\geqslant0$ многочлена $P_n$ определяется формулой
$$
\left(P_n(x)\right)^{(-m)}:=\dfrac{1}{n!}\bigl((x^2-x)^n\bigr)^{(n-m)}.
$$
Многие полезные свойства первообразных полиномов Лежандра на отрезке $[-1;1]$ исследованы в работе \cite{HolSh}.

Термин \textit{точная константа вложения} обозначает число \eqref{eq:const} или его квадрат.

\section{Вид экстремальных сплайнов.}

Для функций $f\in \mathcal H$, некоторого фиксированного числа $a\in(0;1)$ рассмотрим функционалы $F_{k,a}(f)=f^{(k)}(a)$, $k=0,1,\ldots,n-1$.

Очевидна справедливость соотношения
\begin{equation}\label{eq:norm}
\|F_{k,a}\|^2=A^2_{n,k}(a).
\end{equation}

Поскольку указанные функционалы непрерывны в $\mathcal H$, то в соответствии с теоремой Рисса существует  единственная функция $g_{n,k}\in \mathcal H$ такая, что $F_{k,a}(f)=(f,g_{n,k})_{\mathcal H}$.

При этом очевидна справедливость соотношения
\begin{equation}\label{eq:norm}
\|F_{k,a}\|^2=\|g_{n,k}\|^2_{\mathcal H}=A^2_{n,k}(a).
\end{equation}

Поэтому
$$
f^{(k)}(a)=\int_0^1 f^{(n)}(x)\overline{g^{(n)}_{n,k}(x)}\,dx.
$$

Разбивая интеграл в сумму
$$
\int_0^1 f^{(n)}(x)\overline{g_{n,k}^{(n)}(x)}\,dx=\int_0^a f^{(n)}(x)\overline{g_{n,k}^{(n)}(x)}\,dx+\int_a^1 f^{(n)}(x)\overline{g_{n,k}^{(n)}(x)}\,dx
$$
и интегрируя каждый интеграл по частям, получаем
\begin{multline*}
f^{(k)}(a)=f^{(n-1)}(a)\left(\overline{g_{n,k}^{(n)}(a-0)}-\overline{g_{n,k}^{(n)}(a+0)}\right)-f^{(n-2)}(a)\left(\overline{g_{n,k}^{(n+1)}(a-0)}-\overline{g_{n,k}^{(n+1)}(a+0)}\right)+
\ldots+\\
+(-1)^{n-k-1}f^{(k)}(a)\left(\overline{g_{n,k}^{(2n-k-1)}(a-0)}-\overline{g_{n,k}^{(2n-k-1)}(a+0)}\right)+\ldots+
(-1)^{n-1}f(a)\left(\overline{g_{n,k}^{(2n-1)}(a-0)}-\overline{g_{n,k}^{(2n-1)}(a+0)}\right)+
\\+(-1)^{n}\int f(x)\overline{g_{n,k}^{(2n)}(x)}\,dx.
\end{multline*}

Отсюда получаем следующие условия на функцию $g_{n,k}$:
\begin{gather}\label{eq:uslovia_g}
\left.g_{n,k}^{(2n)}\right|_{[0;a)}=\left.g_{n,k}^{(2n)}\right|_{(a;1]}\equiv0,\\
g_{n,k}^{(i)}(a-0)=g_{n,k}^{(i)}(a+0),\quad i\ne 2n-k-1,\\
g_{n,k}^{(i)}(a-0)=g^{(i)}(a+0)+(-1)^{n-k-1}, \quad i=2n-k-1
\end{gather}

Рассмотрим следующие многочлены:

$$
h_{n,k}(x,a)=\sum_{l=0}^{n-1}(-1)^{n-1-l}C_{2n-1-k}^{n-1-l}x^{n-1-l}a^l\sum_{m=0}^{l}C_{n-1+m}^{m}x^{m}.
$$

\begin{thm}\label{thm:splain}
Функции $g_{n,k}$ определяются формулами:
\begin{equation}\label{eq:g}
g_{n,k}(x)=\left\{\begin{aligned}
&\dfrac{(-1)^{n-k-1}}{(2n-k-1)!}(1-a)^{n-k}x^n h_{n,k}(1-x,1-a),\quad x\in [0;a]\\
&\dfrac{(-1)^{n-1}}{(2n-k-1)!}a^{n-k}(1-x)^{n} h_{n,k}(x,a), \quad x\in [a;1].
\end{aligned}\right.
\end{equation}
\end{thm}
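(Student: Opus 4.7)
From the integration by parts carried out above, the Riesz representer $g_{n,k}$ is characterized by conditions \eqref{eq:uslovia_g} together with membership in $\mathcal H$, namely $g_{n,k}^{(j)}(0)=g_{n,k}^{(j)}(1)=0$ for $j=0,\ldots,n-1$. These jointly force $g_{n,k}$ to be piecewise polynomial of degree at most $2n-1$ on $[0;a]$ and on $[a;1]$, with all derivatives $g_{n,k}^{(i)}$ continuous at $x=a$ for $i\ne 2n-k-1$ and with jump $(-1)^{n-k-1}$ at order $2n-k-1$. A dimension count shows the system is square: the factors $x^n$ on $[0;a]$ and $(1-x)^n$ on $[a;1]$ absorb the $n$ boundary conditions at each endpoint and leave $n$ free parameters per piece, totalling $2n$, matched by the $2n$ conditions at $a$. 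Existence of the solution is guaranteed by the Riesz theorem, so the proof reduces to verifying that \eqref{eq:g} satisfies these conditions.

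The plan is to collapse all of the matching conditions at $x=a$ into a single polynomial identity. Extending each branch of \eqref{eq:g} to a polynomial on all of $\mathbb R$, the matching requirements say exactly that the difference of the two branches is a polynomial of degree at most $2n-1$ whose Taylor expansion at $a$ vanishes at every order from $0$ to $2n-1$ except at order $2n-k-1$, where the coefficient must equal $(-1)^{n-k-1}/(2n-k-1)!$. Since such a polynomial is uniquely determined, the theorem reduces to the identity
\[
(1-a)^{n-k}\,x^n\,h_{n,k}(1-x,1-a)\;-\;(-1)^{k}\,a^{n-k}\,(1-x)^n\,h_{n,k}(x,a)\;=\;(x-a)^{2n-k-1}
\]
in the variables $x$ and $a$. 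The boundary conditions at $0$ and $1$ are immediate from the $x^n$ and $(1-x)^n$ factors, and the bound $2n-1$ on the degrees of both pieces is visible on inspection.

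The polynomial identity above is the main technical obstacle. I would verify it by expanding both sides and matching coefficients of monomials $x^i a^j$, exploiting the fact that the inner sum $\sum_{m=0}^{l}C_{n-1+m}^{m}x^m$ appearing in $h_{n,k}$ is the $l$-th partial sum of the generating function for $(1-x)^{-n}$; the double-sum form of $h_{n,k}$ was evidently chosen precisely so that the $k$ leading coefficients (those of $x^{2n-k},\ldots,x^{2n-1}$) cancel on the left-hand side, while the remaining terms assemble into $(x-a)^{2n-k-1}$. A cleaner derivation that bypasses guessing the shape of $h_{n,k}$ is to first construct the symmetric Green's function $G(x,t)$ of the boundary value problem $(-1)^n y^{(2n)}=\delta_t$ with the Dirichlet conditions of $\mathcal H$ as a standard piecewise polynomial spline, and then to set $g_{n,k}(x)=\partial_t^k G(x,t)\big|_{t=a}$; formula \eqref{eq:g} then emerges after collecting the binomial coefficients that appear.
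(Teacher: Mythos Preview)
Your plan is essentially the paper's own proof: it too reduces everything to the single polynomial identity $g_1(x)-(-1)^kg_2(x)=(x-a)^{2n-k-1}$ (with $g_1,g_2$ the two branches rescaled by $(-1)^{n-k-1}(2n-k-1)!$ and $(-1)^{n-1}(2n-k-1)!$), then verifies it by expanding in monomials $x^ia^j$ and matching coefficients via standard binomial-sum manipulations. The only difference is presentation: the paper actually carries out the coefficient comparison in full (splitting into the ranges $0\le m\le n-1$ for $g_2$, $0\le m\le n-k-1$ for $g_1$, and the overlapping high-degree range where the two sides coincide), whereas you outline the strategy and gesture at the generating-function interpretation of the inner sum---but the route is the same.
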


\begin{proof}
1) Очевидно что на каждом из отрезков $[0;a]$ и $[a;1]$ $g_{n,k}$ есть многочлен степени $2n-1$.

2) Также очевидно что $g_{n,k}$ удовлетворяет краевым условиям.

3) Рассмотрим разность $g_{n,k}(a+0)-g_{n,k}(a-0)$.

\textbf{1.}
Обозначим
\begin{gather*}
g_1(x):=(-1)^{n-k-1}(2n-k-1)! g_{n,k}|_{[0,a]},\\
g_2(x):= (-1)^{n-1}(2n-k-1)! g_{n,k}|_{[a,1]}.
\end{gather*}
Покажем, что $g_1(x)-(-1)^k g_2(x) = (x-a)^{2n-k-1}.$
Перепишем $h_{n,k}$ в другом виде:
\begin{multline*}
h_{n,k}(x,a)=\sum_{l=0}^{n-1}(-1)^{n-1-l}C_{2n-1-k}^{n-1-l}x^{n-1-l}a^l\sum_{m=0}^{l}C_{n-1+m}^{m}x^{m}=\\=
\sum_{j=0}^{n-1}\sum_{m=0}^{n-1-j}(-1)^{j}x^{j+m}a^{n-1-j}C_{2n-1-k}^{j}C_{n-1+m}^{m}=\sum_{j=0}^{n-1}
\sum_{i=j}^{n-1}(-1)^{j}x^{i}a^{n-1-j}C_{2n-1-k}^{j}C_{n-1+i-j}^{i-j}=\\
=\sum_{i=0}^{n-1}x^{i}\sum_{j=0}^{i}(-1)^{j}a^{n-1-j}C_{2n-1-k}^{j}C_{n-1+i-j}^{i-j}.
\end{multline*}

Рассмотрим
$$
g_2(x)=a^{n-k}(1-x)^{n} h_{n,k}(x,a) = \left(\sum\limits_{m=0}^n (-1)^m C_n^m x^m\right)\sum_{i=0}^{n-1}x^{i}\sum_{j=0}^{i}(-1)^{j}a^{2n-1-k-j}C_{2n-1-k}^{j}C_{n-1+i-j}^{i-j}.
$$

Раскрыв суммы в последнем равенстве, получаем, что коэффициент при $x^m,$ $m=0,\ldots,n-1$ в $g_2$ равен
 \begin{multline}\label{mu:xm}
 \sum_{i=0}^{m} (-1)^{m-i} C_n^{m-i} \sum_{j=0}^{i} (-1)^j C_{2n-1-k}^j C_{n-1+i-j}^{i-j} a^{2n-1-k-j}=\\=
 \sum_{j=0}^{m} (-1)^{j} a^{2n-1-k-j} C_{2n-1-k}^j \sum_{i=j}^{m} (-1)^{m-i} C_n^{m-i} C_{n-1+i-j}^{i-j}
 \end{multline}
 Тогда коэффициент при $x^m a^{2n-1-k-j},$ $ j=0,\ldots,m-1$ в $g_2$ равен
 \begin{multline*}
(-1)^{j} C_{2n-1-k}^j\sum_{i=j}^{m} (-1)^{m-i} C_n^{m-i} C_{n-1+i-j}^{i-j}= \sum_{u=0}^{m-j} (-1)^{u} C_n^{u} C_{n-1+m-j-u}^{m-j-u}=\\=
 \sum_{u=0}^{v} (-1)^{u} C_n^{u} C_{n-1+v-u}^{v-u} = 0.
 \end{multline*}
 Таким образом, коэффициент при $x^m a^{2n-1-k-j}$ в $g_2$ отличен от нуля только при $j=m$, следовательно,
 коэффициент при $x^m$ в функции $g_2$ равен $(-1)^{m} a^{2n-1-k-m} C_{2n-1-k}^m$.

\textbf{2.} Теперь рассмотрим $g_1(x)$. После преобразований
\begin{multline*}
(1-a)^{n-k}x^n h_{n,k}(1-x,1-a) = (1-a)^{n-k}x^n \sum_{i=0}^{n-1}(1-a)^i\sum_{j=0}^{i}(-1)^{n-1-i}C_{2n-1-k}^{n-1-i}(1-x)^{n-1-i+j}C_{n-1+j}^{j}=\\
=\left(\sum_{m=0}^{n-k} (-1)^m C_{n-k}^m a^m \right) x^n \sum_{i=0}^{n-1}\sum_{l=0}^i C_i^l (-1)^l a^l \sum_{j=0}^{i}(-1)^{n-1-i}C_{2n-1-k}^{n-1-i}(1-x)^{n-1-i+j}C_{n-1+j}^{j}=\\
=\left(\sum_{m=0}^{n-k} (-1)^m C_{n-k}^m a^m \right) x^n \sum_{l=0}^{n-1} a^l \sum_{i=l}^{n-1}\sum_{j=0}^{i}  C_i^l (-1)^l  (-1)^{n-1-i}C_{2n-1-k}^{n-1-i}(1-x)^{n-1-i+j}C_{n-1+j}^{j}
\end{multline*}
получаем, что коэффициент при $a^m,$ $m=0,\ldots,n-k-1$ в $g_1$ равен
\begin{multline*}
x^n \sum_{l=0}^{m} (-1)^{m-l} C_{n-k}^{m-l} \sum_{i=l}^{n-1}\sum_{j=0}^{i}  C_i^l (-1)^l  (-1)^{n-1-i}C_{2n-1-k}^{n-1-i}(1-x)^{n-1-i+j}C_{n-1+j}^{j}=\\
= (-1)^m x^n \sum_{l=0}^{m}\sum_{i=l}^{n-1}\sum_{j=0}^{i} C_{n-k}^{m-l} C_i^l(-1)^{n-1-i}C_{2n-1-k}^{n-1-i}(1-x)^{n-1-i+j}C_{n-1+j}^{j}=\\
= (-1)^m x^n \sum_{i=0}^{n-1}\sum_{l=0}^{min(i,m)}C_{n-k}^{m-l} C_i^l\sum_{j=0}^{i} (-1)^{n-1-i}C_{2n-1-k}^{n-1-i}(1-x)^{n-1-i+j}C_{n-1+j}^{j}.
\end{multline*}
Воспользуемся формулой (свертка Вандермонда): $\sum\limits_lC_{n-k}^{m-l} C_i^l = C_{n+i-k}^m$. Тогда коэффициент при $a^m$ в $g_1$ равен
$$
(-1)^m x^n \sum_{i=0}^{n-1}\sum_{j=0}^{i} C_{n+i-k}^m (-1)^{n-1-i}C_{2n-1-k}^{n-1-i}(1-x)^{n-1-i+j}C_{n-1+j}^{j}.
$$
Сделаем замены $v=n-1-i$ и $u=v+j$. Тогда коэффициент при $a^m$ в $g_1$ примет вид
\begin{multline*}
(-1)^m x^n \sum_{v=0}^{n-1}\sum_{j=0}^{n-1-v} C_{2n-1-k-v}^m (-1)^{v}C_{2n-1-k}^{v}(1-x)^{v+j}C_{n-1+j}^{j}=\\
=(-1)^m x^n
\sum_{v=0}^{n-1}\sum_{u=v}^{n-1} C_{2n-1-k-v}^m (-1)^{v}C_{2n-1-k}^{v}(1-x)^{u}C_{n-1+u-v}^{u-v}=\\
=(-1)^m x^n \sum_{u=0}^{n-1}(1-x)^{u} \sum_{v=0}^{u} C_{2n-1-k-v}^m (-1)^{v}C_{2n-1-k}^{v} C_{n-1+u-v}^{u-v}.
\end{multline*}

Заметим, что $C_{2n-1-k-v}^m  C_{2n-1-k}^{v} = C_{2n-1-k}^{m} C_{2n-1-k-m}^{v}$. С учетом этого равенства коэффициент при $a^m$ примет вид
\begin{multline*}
(-1)^m C_{2n-1-k}^{m} x^n \sum_{u=0}^{n-1}(1-x)^{u} \sum_{v=0}^{u}  (-1)^{v} C_{2n-1-k-m}^{v} C_{n-1+u-v}^{u-v}=\\
=(-1)^m C_{2n-1-k}^{m} x^n \sum_{u=0}^{n-1}(1-x)^{u} \sum_{v=0}^{u} (-1)^{v}C_{2n-1-k-m}^{v} C_{n-1+u-v}^{u-v}=\\
=(-1)^m C_{2n-1-k}^{m} \sum_{u=0}^{n-1} \sum_{p=0}^{u} (-1)^p C_u^p x^{n+p} \sum_{v=0}^{u} (-1)^{v}C_{2n-1-k-m}^{v} C_{n-1+u-v}^{u-v}=\\
=(-1)^m C_{2n-1-k}^{m} \sum_{p=0}^{n-1} x^{n+p} \sum_{u=p}^{n-1} (-1)^p C_u^p \sum_{v=0}^{u} (-1)^{v}C_{2n-1-k-m}^{v} C_{n-1+u-v}^{u-v}.
\end{multline*}
Тогда коэффициент при $a^m x^{n+p}$ равен
\begin{equation}\label{eq:xnp}
(-1)^m C_{2n-1-k}^{m} \sum_{u=p}^{n-1} (-1)^p C_u^p \sum_{v=0}^{u} (-1)^{v}C_{2n-1-k-m}^{v} C_{n-1+u-v}^{u-v}
\end{equation}
Так как
$$
 \sum_{v=0}^{u}  (-1)^{v} C_{2n-1-k-m}^{v} C_{n-1+u-v}^{u-v} = (-1)^u C_{n-k-m-1}^u,
 $$
то при $p=n-1-k-m$ получаем
$$
\sum_{u=p}^{n-1} (-1)^{p+u} C_u^{p} C_{n-k-m-1}^u = 1
$$
При $p>n-1-k-m$ один из множителей $C_u^{p}$ или $ C_{n-k-m-1}^u$ всегда равен 0, и, следовательно, равна нулю и вся сумма.
При $p<n-1-k-m$ воспользуемся тождеством
$$
C_u^p C_{n-k-m-1}^u = C_{n-k-m-1}^p C_{n-k-1-m-p}^{u-p}
$$
тогда,
$$
\sum_{u=p}^{n-1} (-1)^{p+u} C_u^{p} C_{n-k-m-1}^u = C_{n-k-m-1}^p \sum_{u=p}^{n-1-k-m} (-1)^{p+u} C_{n-k-1-m-p}^{u-p}=
$$
$$
=C_{n-k-m-1}^p \sum_{l=0}^{n-1-k-m-p} (-1)^{l} C_{n-k-1-m-p}^{l}=0
$$
Таким образом, коэффициент при $a^m x^{2n-1-k-m}$ равен $(-1)^m C_{2n-1-k}^{m},$ и коэффициенты при остальных степенях равны $0.$

Рассмотрим коэффициенты при оставшихся степенях $x^m a^l,$ $m=n,...,2n-1$ $l=n-k,...,2n-1-k$ в $g_1(x)$ и $g_2(x)$.

Действуя аналогично выводу формулы \eqref{mu:xm}, получаем, что коэффициент при $x^{m}a^{2n-1-k-j}$ в функции $g_2$ равен:
\begin{equation}\label{eq:g2}
(-1)^{j} C_{2n-1-k}^j \sum_{i=j}^{n-1} (-1)^{m-i} C_n^{m-i} C_{n-1+i-j}^{i-j}=
(-1)^{j+m-n} C_{2n-1-k}^j \frac{m-2n}{m-j}C_{2n-j-1}^n C_n^{m-n}.
\end{equation}
Здесь мы воспользовались тождеством
$$
\sum_{i=j}^{m} (-1)^{m-i} C_n^{m-i} C_{n-1+i-j}^{i-j}=(-1)^{m-n} \frac{m-2n}{m-j}C_{2n-j-1}^n C_n^{m-n}.
$$

Действуя аналогично выводу формулы \eqref{eq:xnp}, получаем, что
коэффициент при $a^{m}x^{n+p}$ в функции $g_1$ равен:
$$
(-1)^m C_{2n-1-k}^{m} \sum_{u=p}^{n-1} (-1)^p C_u^p \sum_{v=0}^{u} (-1)^{v}C_{2n-1-k-m}^{v} C_{n-1+u-v}^{u-v}.
$$
Тогда коэффициент при $x^ma^{2n-1-k-j}$ равен
$$
(-1)^{2n-1-k-j}C_{2n-k-1}^j\sum_{u=m-n}(-1)^{m-n}C_u^{m-n}\sum_{v=0}^u(-1)^v C_j^v C_{n-1+u-v}^{u-v}.
$$

Преобразуем  внутреннюю сумму:
$$
\sum_{v=0}^u(-1)^v C_j^v C_{n-1+u-v}^{u-v}=C_{n+u-j-1}^{n-j-1}.
$$

Продолжая преобразование полученной суммы с внешней суммой, получаем
\begin{multline}\label{eq:g1}
(-1)^{2n-1-k-j}C_{2n-k-1}^j\sum_{u=m-n}^{n-1}(-1)^{m-n}C_u^{m-n}C_{n+u-j-1}^{n-j-1}=\\=
(-1)^{2n-1-k-j}C_{2n-k-1}^j(-1)^{m-n+1}\frac{m-2n}{m-j}C_n^{m-n}C_{2n-j-1}^n.
\end{multline}

Таким образом, коэффициенты при $x^m a^l,$ $m=n,...,2n-1$, $l=n-k,...,2n-1-k$ в $g_1(x)$ и $(-1)^kg_2(x)$ равны.
Итого:
$$
g_1(x)-(-1)^kg_2(x) = \sum_{m=0}^{n-k-1} a^m x^{2n-1-k-m} (-1)^m C_{2n-1-k}^{m} - (-1)^k \sum_{m=0}^{n-1} x^m (-1)^{m} a^{2n-1-k-m} C_{2n-1-k}^m =
$$
$$
=\sum_{m=0}^{n-k-1} a^m x^{2n-1-k-m} (-1)^m C_{2n-1-k}^{m} + \sum_{m=n-k}^{2n-k-1} (-1)^{m} x^{2n-1-k-m} a^{m} C_{2n-1-k}^m =(x-a)^{2n-k-1}.
$$
\end{proof}

\begin{rim}
Теорема \ref{thm:splain} формально позволяет вычислять значения $A^2_{n,k}(a)$. Для этого, согласно формуле \eqref{eq:norm}, достаточно вычислить
$\|g_{n,k}\|^2_{\mathcal H}$. Однако вычисление производных сплайнов $g_{n,k}$, а потом интегрирование квадрата $n$-ой производной  представляет собой технически нелегкую задачу.
\end{rim}

\section{Вспомогательные формулы для вычисления $A^2_{n,k}$.}

Заметим, что $A^2_{n,k}(a)=\|g_{n,k}\|^2_{\mathcal H}=g^{(n)}_{n,k}(a)$, но вычисление величины $A^2_{n,k}(a)$ через производные  функций \eqref{eq:g}, как уже указывалось, приводит к громоздким выкладкам. Поэтому мы воспользуемся методом, предложенным в работе \cite{Kalyab}, который основан на свойствах многочленов Лежандра.

Рассмотрим  ортогональную  на отрезке $[0;1]$ систему смещенных многочленов Лежандра $\left\{ P_m\right\}_{m=0}^\infty$, определяемых формулой
$$
P_m(x)=\dfrac{1}{m!}\bigl((x^2-x)^m\bigr)^{(m)}, \quad m=0,1,\ldots
$$
Смещенные  полиномы Лежандра имеют следующую нормировку: $\|P_m\|^2_{L_2[0;1]}=\frac{1}{2m+1}$.

Первообразные порядка $l\geqslant 0$ таких полиномов понимаются следующим образом
$$
P_m^{(-l)}=\dfrac{1}{m!}\bigl((x^2-x)^m\bigr)^{(m-l)}.
$$

В \cite{Kalyab} была получена формула, связывающая функции  $A^2_{n,k}$,  $A^2_{n-k,0}$ с первообразными полиномов Лежандра. Для отрезка $[0;1]$ она выглядит следующим образом:
\begin{equation}\label{eq:An}
A^2_{n,k}(a)=A^2_{n-k,0}(a)-\sum_{m=n-k}^{n-1}\bigl( P_m^{(k-n)}(a)\bigr)^2(2m+1),
\end{equation}

Для $A^2_{n-1,k-1}$ соответственно получаем
\begin{equation}\label{eq:An-1}
A^2_{n-1,k-1}(a)=A^2_{n-k,0}(a)-\sum_{m=n-k}^{n-2}\bigl(P_m^{(k-n)}(a)\bigr)^2(2m+1).
\end{equation}

Рассмотрев разность \eqref{eq:An} и \eqref{eq:An-1}, получим
\begin{equation}\label{eq:razn}
A^2_{n,k}(a)-A^2_{n-1,k-1}(a)=-\bigl(P_{n-1}^{(k-n)}(a)\bigr)^2(2n-1).
\end{equation}

Таким образом, достаточно знать первообразную соответствующего порядка только одного полинома Лежандра. Первообразная полинома Лежандра порядка $n-k$ равна
$$
P_{n-1}^{(k-n)}(a)=\dfrac{1}{(n-1)!}\left(\bigl(a^2-a\bigr)^{n-1}\right)^{(k-1)}.
$$
Действуя аналогично, можно получить соотношение между $A^2_{n,k}(a)$ и $A^2_{n-2,k-2}(a)$, причем  индексы  у них имеют одинаковую четность, но  знать надо  первообразные уже двух полиномов Лежандра. Действительно:
$$
A^2_{n-1,k-1}(a)-A^2_{n-2,k-2}(a)=-\bigl( P_{n-2}^{(k-n)}(a)\bigr)^2(2n-3).
$$
Следовательно,
$$
A^2_{n,k}(a)=A^2_{n-2,k-2}(a)-\bigl(P_{n-2}^{(k-n)}(a)\bigr)^2(2n-3)-
\bigl( P_{n-1}^{(k-n)}(a)\bigr)^2(2n-1).
$$

Для упрощения дальнейших вычислений введем новую переменную $t:=a^2-a$. Заметим, что для произвольной дифференцируемой функции $f$ справедливо соотношение $\dfrac{df(t)}{da}=(2a-1)f'(t)$, которым в дальнейшем мы будем часто пользоваться.
Условие $a\in[0;1]$ влечет  условие $t\in\left[-\frac14;0\right]$.

\begin{lem}\label{lem:tpl}
Для каждого $l=0,1,\ldots$ такого, что $k\leqslant m$, справедливы соотношения
\begin{gather*}
\dfrac{d^k(t^m)}{da^k}=\sum_{j=0}^l a_{lj} t^{m-l-j}(2a-1)^{2j}, \quad k=2l;\\
\dfrac{d^k(t^m)}{da^k}=(2a-1)\sum_{j=0}^l b_{lj} t^{m-l-j-1}(2a-1)^{2j}, \quad k=2l+1,
\end{gather*}
где $a_{lj}$, $b_{lj}$ некоторые числовые коэффициенты.
\end{lem}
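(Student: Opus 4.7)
The plan is to proceed by induction on $k$, treating the two parities together: the even case $k=2l$ and the odd case $k=2l+1$ will feed into each other on successive differentiations.

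\textbf{Base case ($k=0$, $l=0$, even).} We trivially have $t^m = t^{m-0-0}(2a-1)^{2\cdot 0}$, so the first identity holds with $a_{00}=1$.

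\textbf{From $k=2l$ to $k=2l+1$.} Assuming the even formula, I would differentiate term by term using
$$
\frac{d}{da}\bigl(t^{m-l-j}(2a-1)^{2j}\bigr)=(m-l-j)t^{m-l-j-1}(2a-1)^{2j+1}+4j\,t^{m-l-j}(2a-1)^{2j-1}.
$$
Both summands contain an odd power of $(2a-1)$, so one can factor $(2a-1)$ outside the sum. The first summand gives a contribution of type $t^{m-l-j-1}(2a-1)^{2j}$, and the second, after pulling out $(2a-1)$ and shifting the summation index $j\mapsto j+1$, becomes a contribution of the same shape. Collecting terms yields
$$
\frac{d^{2l+1}(t^m)}{da^{2l+1}}=(2a-1)\sum_{j=0}^{l}b_{lj}\,t^{m-l-j-1}(2a-1)^{2j},
$$
with explicit $b_{lj}$ expressible through $a_{l,j}$ and $a_{l,j+1}$. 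This is the desired odd-$k$ form.

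\textbf{From $k=2l+1$ to $k=2l+2=2(l+1)$.} I would now differentiate the odd-case expression, writing each term as $t^{m-l-j-1}(2a-1)^{2j+1}$ and using
$$
\frac{d}{da}\bigl(t^{m-l-j-1}(2a-1)^{2j+1}\bigr)=(m-l-j-1)t^{m-l-j-2}(2a-1)^{2j+2}+(4j+2)\,t^{m-l-j-1}(2a-1)^{2j}.
$$
Every power of $(2a-1)$ appearing is even, as required. The first piece, reindexed by $j\mapsto j-1$, fits the pattern $t^{m-(l+1)-j}(2a-1)^{2j}$ for $1\le j\le l+1$, while the second already has the pattern $t^{m-(l+1)-j+1}\cdot\ldots$; here I would use $t^{m-l-j-1}=t^{m-(l+1)-j}$ (the indexing aligns exactly). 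Summing the two contributions gives the even-case formula for $l+1$ with coefficients $a_{l+1,j}$ built from $b_{lj}$ and $b_{l,j-1}$.

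\textbf{Expected obstacle.} There is no genuine analytic difficulty; the entire argument is bookkeeping. The only point where care is needed is verifying that in the even-to-odd step the factor $(2a-1)$ can indeed be extracted globally---this rests on both summands of the per-term derivative carrying odd powers of $(2a-1)$, which is automatic from the product rule applied to $t^{m-l-j}(2a-1)^{2j}$. After that, the recursions $b_{lj}=(m-l-j)a_{lj}+4(j+1)a_{l,j+1}$ (with the obvious boundary adjustments at $j=l$) and the analogous relation producing $a_{l+1,j}$ from $b_{lj}$ and $b_{l,j-1}$ close the induction and, if desired, give explicit formulas for the coefficients.
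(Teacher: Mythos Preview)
Your argument is correct and is essentially identical to the paper's own proof: both proceed by induction on $k$, differentiate a generic term $t^{m-l-j}(2a-1)^{2j}$ via the product rule, factor out $(2a-1)$ to pass from even to odd parity, and observe that the resulting monomials match the claimed pattern for the next $k$. The paper writes out only the even$\to$odd step explicitly and declares the other direction analogous, whereas you spell out both, but the method is the same.
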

\begin{proof}
1) При $k=0$ и $k=1$ утверждение очевидно.

2) Далее  достаточно посмотреть, что происходит при однократном  дифференцировании со слагаемым вида
$t^{m-l-j}(2a-1)^{2j}$ (при четных $k$) или $t^{m-l-j-1}(2a-1)^{2j}$ (при нечетных $k$).
Пусть $k=2l$. Получаем:
\begin{multline*}
\dfrac{d(t^{m-l-j}(2a-1)^{2j}}{da}=(m-l-j)t^{m-l-j-1}(2a-1)^{2j+1}+4jt^{m-l-j}(2a-1)^{2j-1}=\\=
(2a-1)\left((m-l-j)t^{m-l-j-1}(2a-1)^{2j}+4jt^{m-l-j}(2a-1)^{2(j-1)}\right).
\end{multline*}
Видно, что слагаемое $t^{m-l-j}(2a-1)^{2j}$ преобразовалось в два слагаемых, отвечающих правилу дифференцирования при $k=2l+1$.

Случай нечетного $k$ проверяется аналогично.

Заметим, что максимальная степень переменной $t$ в суммах равна $m-l$ ($k=2l$) или $m-l-1$ ($k=2l+1$), а минимальная равна $m-k$.

Следовательно,  $\left(\dfrac{d^k(t^m)}{da^k}\right)^2$ есть многочлен степени $2m-k$, а минимальная степень переменной $t$ у него равна $2(m-k)$.
\end{proof}

\begin{lem}\label{lem:PL1}
Для каждого $k=1,2,\ldots, n-1$ многочлен $\left(P_{n-1}^{(k-n)}\right)^2$
есть многочлен степени $2n-k-1$ от $t$. Минимальная степень $t$ в этом многочлене равна $2(n-k)$.
\end{lem}

\begin{proof}
1) Для $k=1$ это утверждение очевидно:
$$
\left(P_{n-1}^{(1-n)}(x)\right)^2=\left(\dfrac{1}{(n-1)!}(x^2-x)^{n-1}\right)^2=\dfrac{t^{2n-2}}{((n-1)!)^2}.
$$

2) Далее заметим, что
$$
\left(P_{n-1}^{(k-n)}(x)\right)^2=\left(\dfrac{1}{(n-1)!}\bigl((x^2-x)^{n-1}\bigr)^{(k-1)}\right)^2=
\dfrac{1}{((n-1)!)^2}\left(\dfrac{d^{k-1}t^{n-1}}{dx^{k-1}}\right)^2.
$$

Применение леммы \eqref{lem:tpl} заканчивает доказательство.
\end{proof}

\begin{thm}
В переменной $t=a^2-a$ величины  $A^2_{n,k}$ можно представить в виде
$$
A^2_{n,k}(a)=-\dfrac{t^{2n-2k-1}}{((n-k)!)^2(2n-2k-1)}\cdot B_{n,k}(t),
$$
где $B_{n,k}$ --- многочлены степени $k$, свободный член которого равен $(n-k)^2$.
\end{thm}
\begin{proof}
1) Для $A^2_{n,0}$ такое представление очевидно выполнено, так как
$$
A^2_{n,0}(a)=\dfrac{(a(1-a))^{2n-1}}{((n-1)!)^2(2n-1)}=-\dfrac{t^{2n-1}}{((n)!)^2(2n-1)}\cdot n^2,
$$
т.е. $B_{n,0}=n^2$.

2) Из представления \eqref{eq:razn} и леммы \eqref{lem:PL1} следует, что
\begin{multline}
A^2_{n,k}=A^2_{n-1,k-1}-\bigl(P_{n-1}^{(k-n)}(a)\bigr)^2(2n-1)=\\=
\dfrac{-t^{2n-2k-1}}{((n-k)!)^2(2n-2k-1)}\cdot B_{n-1,k-1}(t)-
t^{2n-2k}\tilde B_{n,k}(t)=\dfrac{-t^{2n-2k-1}}{((n-k)!)^2(2n-2k-1)}\left(B_{n-1,k-1}(t)+t\tilde B_{n,k}(t)\right),
\end{multline}
где степень многочлена $\tilde B_{n,k}$ равна $k-1$.

Многочлен $B_{n,k}:=B_{n-1,k-1}(t)+t\tilde B_{n,k}(t)$ имеет степень $k$, а его свободный член совпадает со свободным членом многочлена $B_{n-1,k-1}$. Поскольку $(n-1)-(k-1)=n-k$, то применение метода математической индукции завершает доказательство.
\end{proof}


\section{Константы вложения $\Lambda^2_{n,3}$}\label{sub:An3}

Преобразуем полученную в \cite{Kalyab} для  $A^2_{n,2}(a)$ формулу  на отрезок $[0;1]$:

\begin{multline*}
A^2_{n,2}(a)=\dfrac{(1-a)^{2n-5}a^{2n-5}}{((n-2)!)^2(2n-5)}\left((n-2)^2-4(n-1)(2n-5)a +4(3n-2)(2n-5)a^2-\right.
\\-
\left.8(2n-1)(2n-5)a^3+4(2n-1)(2n-5)a^4\right).
\end{multline*}

В переменной $t$:
$$
A^2_{n,2}(t)=\dfrac{-t^{2n-5}}{((n-2)!)^2(2n-5)}((n-2)^2+4(n-1)(2n-5)t+4(2n-1)(2n-5)t^2).
$$

Вычислим $P_{n-1}^{(3-n)}(t)$:
$$
P_{n-1}^{(3-n)}(t)=\dfrac{1}{(n-1)!}\dfrac{d^2t^{n-1}}{da^2}=\dfrac{t^{n-3}}{(n-2)!}(2(2n-3)t+n-2).
$$

Из формулы \eqref{eq:razn} после упрощения получаем,

\begin{multline}\label{eq:An3}
A^2_{n,3}=\dfrac{-t^{2n-7}}{((n-2)!)^2(2n-7)}
\left(4(2n-1)(2n-3)^2(2n-7)t^3+12(n-1)(n-2)(2n-3)(2n-7)t^2+\right.\\+\left.
3(n-2)^2(2n-3)(2n-7)t+(n-2)^2(n-3)^2\right).
\end{multline}

Множитель $\dfrac{1}{((n-2)!)^2(2n-7)}$ не влияет на точки максимума и минимума функции $A^2_{n,3}$.

Для упрощения вычислений введем функцию

$$
f_{n,3}(t):=A^2_{n,3}\cdot ((n-2)!)^2(2n-7).
$$

Тогда
$$
\dfrac{df_{n,3}}{da}=-t^{2n-8}(2a-1)\bigl(tf_{n,3}'(t)+(2n-7)f_{n,3}(t)\bigr),
$$
где
$$
f_{n,3}'(t)=12(2n-3)^2(2n-1)(2n-7)t^2+24(n-1)(n-2)(2n-3)(2n-7)t+3(n-2)^2(2n-3)(2n-7).
$$

После приведения подобных слагаемых получаем:
\begin{multline*}
\dfrac{df_{n,3}}{da}=-t^{2n-8}(2a-1)(2n-7)(n-2)\left[8(2n-1)(2n-3)^2t^3+12(n-1)(2n-3)(2n-5)t^2+\right.\\+\left.
6(n-2)(n-3)(2n-3)t+(n-2)^2(n-3)^2\right].
\end{multline*}

Кубический многочлен, стоящий в квадратных скобках в предыдущей формуле,
раскладывается на множители следующим образом:
$$
\bigl(2(2n-3)t+(n-3)\bigr)\cdot\bigl(4(2n-3)(2n-1)t^2+4(n-2)(2n-3)t+(n-2)(n-3)\bigr),
$$
откуда получаем его корни
\begin{gather*}
t_1(n):=\dfrac{-(n-2)(2n-3)-\sqrt{3(n-2)(2n-3)}}{2(2n-1)(2n-3)},\quad
t_2(n):=\dfrac{-(n-2)(2n-3)+\sqrt{3(n-2)(2n-3)}}{2(2n-1)(2n-3)}\,\\
t_3(n):=-\dfrac{n-2}{2(2n-1)}.
\end{gather*}
Для простоты, там где это не важно, зависимость $t_i$ от $n$ мы указывать не будем.
Непосредственно проверяется, что для этих корней выполнены неравенства
$$
-\dfrac14<t_1<t_3<t_2<0.
$$

По точкам  $t_i$ несложно найти точки экстремума функции $f$ (напомним, что точка $a=\frac12$ также является нулем $f'_{n,3}$):
$$
a_i^{\pm}=\dfrac{1}{2}\pm\dfrac{1}{2}\sqrt{1+4t_i},
$$
из которых
$$
a=\frac12,\quad a_3^{\pm}=\dfrac{1}{2}\pm\dfrac{1}{2}\sqrt{1+4t_3}
$$
--- точки минимума, а
$$
a_1^{\pm}=\dfrac{1}{2}\pm\dfrac{1}{2}\sqrt{1+4t_1}\quad \text{и}\quad a_2^{\pm}=\dfrac{1}{2}\pm \dfrac{1}{2}\sqrt{1+4t_2}
$$
--- точки максимума.

Чтобы определить точную константу вложения, надо определить, какое из двух значений $f_{n,3}(t_1)$, $f_{n,3}(t_2)$ является максимальным.

Подставив $t_1$ и $t_2$ в $f_{n,3}$ и упростив полученные выражения, получим
\begin{gather*}
f_{n,3}(t_1)=-t_1^{2n-7}\dfrac{3(n-2)\left(6n^2-27n+30-\sqrt 3(2n-7)\sqrt{2n^2-7n+6}\right)}{2(2n-1)^2},\\
f_{n,3}(t_2)=-t_2^{2n-7}\dfrac{3(n-2)\left(6n^2-27n+30+\sqrt 3(2n-7)\sqrt{2n^2-7n+6}\right)}{2(2n-1)^2}.
\end{gather*}

Рассмотрим отношение
$$
\dfrac{f_{n,3}(t_1)}{f_{n,3}(t_2)}=
\left(\dfrac{t_1}{t_2}\right)^{2n-7}
\dfrac{6n^2-27n+30-\sqrt 3(2n-7)\sqrt{2n^2-7n+6}}
{6n^2-27n+30+\sqrt 3(2n-7)\sqrt{2n^2-7n+6}}.
$$

При $n\geqslant 4$ функция
$$
g(n):=\left(\dfrac{t_1(n)}{t_2(n)}\right)^{2n-7}=
\left(1+\dfrac{2\sqrt{3(n-2)(2n-3)}}{(n-2)(2n-3)-\sqrt{3(n-2)(2n-3)}}\right)^{2n-7}
$$
\textit{возрастая} стремится к $e^{2\sqrt 6}$ при $n\to+\infty$.

При тех же $n$ функция
$$
h(n):=\dfrac{6n^2-27n+30-\sqrt 3(2n-7)\sqrt{2n^2-7n+6}}
{6n^2-27n+30+\sqrt 3(2n-7)\sqrt{2n^2-7n+6}}
$$
\textit{убывая} стремится к $5-2\sqrt 6>0,1$ при $n\to+\infty$.

Так как $g(4)=\dfrac{13+2\sqrt{3}}{7}>2$,
$h(4)=\dfrac{59-6\sqrt{30}}{49}>\dfrac12$, $g(5)=\left(\dfrac{21+3\sqrt 7}{21-\sqrt 7}\right)^3=
\left(\dfrac{4+\sqrt 7}{3}\right)^3>10$, то
$$
\dfrac{f_{n,3}(t_1(n))}{f_{n,3}(t_2(n))}>1
$$
при всех $n\geqslant 4$. Следовательно, максимальное значение функция  $A^2_{n,3}$ принимает при
$a_1^{\pm}=\dfrac{1}{2}\pm\dfrac{1}{2}\sqrt{1+4t_1}$.

Само это значение равно
\begin{equation}\label{eq:const_emb}
\left(\dfrac{(n-2)(2n-3)+\sqrt{(n-2)(2n-3)}}{(2n-1)(2n-3)}\right)^{2n-7}\cdot
\dfrac{(n-2) \left(3(n-2)(2n-5)-\sqrt{3}(2n-7)\sqrt{(n-2)(2n-3)}\right)}
{4^{n-3}((n-2)!)^2(2n-1)^2(2n-7)}.
\end{equation}

\bigskip

\section{Константы вложения $\Lambda^2_{n,5}$}

Опираясь на явный вид $A^2_{n,3}$ \eqref{eq:An3} и используя рекуррентную формулу \eqref{eq:An-1}, найдем
вид  $A^2_{n-1,4}$ на отрезке $[0;1]$:
\begin{multline*}
A^2_{n-1,4}(t)=-\dfrac{t^{2n-11}}{((n-4)!)^2(2n-11)}\left(16(2n-3)(2n-5)^2(2n-11)t^4+\right.\\+
32(n-2)(2n-5)(2n-7)(2n-11)t^3+
24(n-3)(n-4)(2n-5)(2n-11)t^2+\\+\left.
8(n-3)(n-4)^2(2n-11)t+(n-4)^2(n-5)^2\right).
\end{multline*}

Первообразная порядка $(5-n)$ смещенного полинома Лежандра $P_{n-1}$ равна
$$
P_{n-1}^{(5-n)}=\dfrac{1}{(n-1)!)}\dfrac{d^4t^{n-1}}{d^4a}=
\dfrac{t^{n-5}}{(n-3)!}\left(4(2n-3)(2n-5)t^2+4(n-3)(2n-5)t+(n-3)(n-4)\right)
$$

Аналогично вычислениям, проведенным в разделе \ref{sub:An3}, применяя формулу \eqref{eq:razn}, найдем
\begin{multline}\label{mult:An5}
A^2_{n,5}=\dfrac{-t^{2n-11}}{((n-3)!)^2(2n-11)}\cdot\left[16(2n-1)(2n-3)^2(2n-5)^2(2n-11)t^5+\right.\\+
80(n-1)(n-3)(2n-3)(2n-5)^2(2n-11)t^4+40(n-2)(n-3)(2n-3)(2n-5)(2n-7)(2n-11)t^3+\\+
40(n-2)(n-3)^2(n-4)(2n-5)(2n-11)t^2+\\+\left.
5(n-3)^2(n-4)^2(2n-5)(2n-11)t+(n-3)^2(n-4)^2(n-5)^2\right].
\end{multline}
Введем функцию
$$
f_{n,5}(t):=A^2_{n,5}(t)\cdot ((n-3)!)^2(2n-11)
$$
и найдем ее производную
\begin{multline}\label{eq:deriv5}
\dfrac{d f_{n,5}}{da}=-t^{2n-12}(n-3)(2n-11)(2a-1)\cdot\left[32(2n-1)(2n-3)^2(2n-5)^2t^5+\right.\\+
80(n-1)(2n-3)(2n-5)^2(2n-7)t^4+80(n-2)(n-4)(2n-3)(2n-5)(2n-7)t^3+\\+
40(n-2)(n-3)(n-4)(2n-5)(2n-9)t^2+10(n-3)(n-4)^2(n-5)(2n-5)t+\\+\left.
(n-3)(n-4)^2(n-5)^2\right].
\end{multline}

Многочлен, стоящий в квадратных скобках формулы \eqref{eq:deriv5} раскладывается на множители:
\begin{multline}
\left[4(2n-3)(2n-5)t^2+4(n-4)(2n-5)t+(n-4)(n-5)\right]\cdot\\
\left[8(2n-1)(2n-3)(2n-5)t^3+12(n-3)(2n-3)(2n-5)t^2+6(n-3)(n-4)(2n-5)t+\right.\\+\left.
(n-3)(n-4)(n-5)\right].
\end{multline}
Обозначим их соответственно
\begin{gather*}
g_2(t):=4(2n-3)(2n-5)t^2+4(n-4)(2n-5)t+(n-4)(n-5),\\
g_3(t):=8(2n-1)(2n-3)(2n-5)t^3+12(n-3)(2n-3)(2n-5)t^2+6(n-3)(n-4)(2n-5)t+\\
+(n-3)(n-4)(n-5).
\end{gather*}

Исследуем нули $\dfrac{d f_{n,5}}{da}$.
$$
g_2(t)=0 \Leftrightarrow \hat t_{1,2}=-\dfrac{n-4}{2(2n-3)}\pm
\dfrac{\sqrt{5(n-4)(2n-5)}}{2(2n-3)(2n-5)}.
$$

Нумерацию корней будем вести в порядке возрастания. Несложно убедиться, что при $n\geqslant 6$ выполнены неравенства
$$
-\dfrac{1}{4}<\hat t_1<\hat t_2<0.
$$
Многочлен $g_3$ имеет три вещественных корня. Действительно,

1) $g_3\left(-\dfrac14\right)=-\dfrac{15}{8}<0$.

2) Прямой подстановкой проверяется справедливость  неравенств $g_3(\hat t_1)>0$, $g_3(\hat t_2)<0$. В  самом деле
$$
g_3(\hat t_1)=-\dfrac{10(n-4)\left[(2n-5)(2n-13)-(2n-7)\sqrt{5(n-4)(2n-5)}\right]}{(2n-3)^2(2n-5)^2}>0.
$$

Последнее неравенство выполнено при $n\geqslant 6$, поскольку знак $g_3(\hat t_1)$ противоположен знаку
разности $(2n-5)(2n-13)-(2n-7)\sqrt{5(n-4)(2n-5)}$, которая в свою очередь отрицательна:
\begin{multline*}
(2n-13)(2n-5)<(2n-7)\sqrt{5((n-4)(2n-5)} \Leftrightarrow(2n-13)^2(2n-5)-5(2n-7)^2(n-4)<0 \Leftrightarrow\\
\Leftrightarrow -3(n-5)(2n-3)^2<0.
\end{multline*}

С другой стороны
$$
g_3(\hat t_2)=-\dfrac{10(n-4)\left[(2n-5)(2n-13)+(2n-7)\sqrt{5(n-4)(2n-5)}\right]}{(2n-3)^2(2n-5)^2}<0.
$$

3) Наконец, $g_3(0)=(n-3)(n-4)(n-5)>0$ при $n\geqslant 6$.

Таким образом многочлен $g_3$ имеет три смены знака на отрезке $\left[-\dfrac14;0\right]$, а значит имеет ровно три корня на промежутке $\left(-\dfrac14;0\right)$. Обозначим их $t_i$, $i=1,2,3$ и пронумеруем в порядке возрастания: $-1/4<t_1<t_2<t_3<0$. Уточним расположение этих корней.

Из рассуждений, приведенных выше,  следует, что корни многочленов $g_2$ и $g_3$ чередуются:
$$
-\dfrac14<t_1<\hat t_1<t_2<\hat t_2<t_3<0.
$$

Переходя как в разделе \ref{sub:An3} к переменной $a$, находим, что
$$
a_0:=\dfrac12;\quad a^{(2),\pm}_i:=\dfrac12\pm\sqrt{1+4t^{(2)}_i}, \quad i=1,2  \text{ --- точки минимума},
$$

$$
a^{(3),\pm}_i:=\dfrac12\pm\sqrt{1+4t^{(3)}_i}, \quad i=1,2,3  \text{ --- точки максимума}.
$$

\subsection{Вычисление корней многочлена $g_3$.}
Найдем корни многочлена $g_3$. Обозначим его коэффициенты
\begin{gather*}
\mathbf{a}:=8(2n-1)(2n-3)(2n-5),\quad
\mathbf{b}:=12(n-3)(2n-3)(2n-5),\\
\mathbf{c}:=6(n-3)(n-4)(2n-5), \quad \mathbf{d}:=(n-3)(n-4)(n-5).
\end{gather*}
Классическая замена $t=y-\frac{\mathbf{a}}{\mathbf{b}}$ приводит многочлен $g_3$ к приведенному виду
$$
\tilde g_3(y):=y^3+py+q,
$$
где $p=\tfrac{\textbf{c}}{\mathbf{a}}-\tfrac{\mathbf{b^2}}{\mathbf{3a^2}}=-\frac{15(n-3)}{4(2n-1)^2(2n-3)}$,
$q=\frac{2}{27}\left(\frac{\mathbf{b}}{\mathbf{a}}\right)^3-\frac{\mathbf{bc}}{\mathbf{3a^2}}+
\frac{\mathbf{d}}{\mathbf{a}}=-\frac{5(n-3)(2n-11)}{4(2n-1)^3(2n-3)(2n-5)}$.

Определим величину
$$
\mathbf{Q}:=\left(\frac{p}{3}\right)^3+\left(\frac{q}{2}\right)^2.
$$
Подставив найденные значения $p$ и $q$, получим
$$
\mathbf{Q}=-\frac{125(n-3)^2(n-4)}{64(2n-1)^4(2n-5)^2}.
$$

Далее определим два числа
$$
\alpha:=\sqrt[3]{-\frac{q}{2}+\sqrt{\mathbf Q}},\qquad \beta:=\sqrt[3]{-\frac{q}{2}-\sqrt{\mathbf Q}}.
$$

Корни многочлена $\tilde g_3$ находятся по формулам (ветви корней для $\alpha$ и $\beta$ надо подбирать, так чтобы $\alpha\cdot\beta=-\frac{p}{3}$).
$$
y_1=\alpha+\beta,\quad y_{2,3}=-\dfrac{\alpha+\beta}{2}\pm i\dfrac{\alpha-\beta}{2}\sqrt 3.
$$

Числа $-\frac{q}{2}\pm\sqrt{\mathbf Q}$ можно представить в виде
$$
\dfrac{5(n-3)\sqrt{5(n-3)}}{8(2n-1)^3(2n-3)\sqrt{2n-3}}\left(\cos \varphi\pm i\sin\varphi\right),
$$
где
$$
\cos\varphi=\dfrac{2n-11}{2n-5}\dfrac{\sqrt{2n-3}}{\sqrt{5(n-3)}},\qquad
\sin\varphi=\dfrac{2n-1}{2n-5}\dfrac{\sqrt{3(n-4)}}{\sqrt{5(n-3)}}.
$$

Оценим $\tg \varphi=\dfrac{2n-1}{2n-11}\dfrac{\sqrt{3(n-4)}}{\sqrt{2n-3}}$.
Рассмотрим функцию $w(s):=\dfrac{2s-1}{2s-11}\dfrac{\sqrt{s-4}}{\sqrt{2s-3}}$ при $s\geqslant 6$. Ее производная
$$
w'(s)=-\dfrac{5(12s^2-64s+85)}{2(2s-11)^2\sqrt{(s-4)(2s-3)^3}}<0 \quad (\text{при } s\geqslant 6),
$$

поэтому $1<\sqrt{\dfrac32}=\sqrt3\lim_{n\to\infty} w(n)<\tg\varphi\leqslant \sqrt{3}w(6)=22$. Следовательно,
$\frac{\pi}{4}<\varphi<\frac{\pi}{2}$.

Таким образом, корни приведенного кубического трехчлена $\tilde g_3$ имеют вид
$$
y_1=\dfrac{\sqrt{5(n-3)}}{(2n-1)\sqrt{2n-3}}\cos\frac{\varphi}{3},\quad y_{2,3}=-\dfrac{\sqrt{5(n-3)}}{(2n-1)\sqrt{2n-3}}\cos\left(\frac{\varphi}{3}\pm\frac{\pi}{3}\right),
$$
при этом $\frac{\pi}{12}<\frac{\varphi}{3}<\frac{\pi}{6}$, $-\frac{\pi}{4}<\frac{\varphi}{3}-\frac{\pi}{3}<-\frac{\pi}{6}$,
$\frac{5\pi}{12}<\frac{\varphi}{3}+\frac{\pi}{3}<\frac{\pi}{2}$.
Так как
$$
-\cos\left(\frac{\varphi}{3}-\frac{\pi}{3}\right)<-\cos\left(\frac{\varphi}{3}+\frac{\pi}{3}\right)<0<\cos\frac{\varphi}{3},
$$
то в переменной $t$ корни многочлена $g_3$, с учетом упорядочения  $t_1<t_2<t_3$,  имеют вид
\begin{gather}\label{eq:t1}
t_1=-\dfrac{n-3}{2(2n-1)}-\dfrac{\sqrt{5(n-3)}}{(2n-1)\sqrt{2n-3}}\cos\left(\frac{\varphi}{3}-\frac{\pi}{3}\right),\\
\label{eq:t2}
t_2=-\dfrac{n-3}{2(2n-1)}-\dfrac{\sqrt{5(n-3)}}{(2n-1)\sqrt{2n-3}}\cos\left(\frac{\varphi}{3}+\frac{\pi}{3}\right),\\
\label{eq:t3}
t_3=-\dfrac{n-3}{2(2n-1)}+\dfrac{\sqrt{5(n-3)}}{(2n-1)\sqrt{2n-3}}\cos\frac{\varphi}{3}.
\end{gather}

\subsection{Определение глобального максимума функции $f_{n,5}$.}

Заметим, что вид точек максимума функций $A^2_{n,5}$ значительно более сложный по сравнению с точками максимума функций $A^2_{n,1}$ и $A^2_{n,3}$. Методы определения глобального максимума, примененные для  величин $A^2_{n,1}$, $A^2_{n,2}$ в работе \cite{Kalyab} и развитые для $A^2_{n,3}$ в разделе \ref{sub:An3} настоящей работы, оказываются малопригодными для величины $A^2_{n,5}$. Причины этого содержатся не только в громоздкости формул \eqref{eq:t1}--\eqref{eq:t3}. Кроме этого играет свою роль неявный вид угла $\frac{\varphi}{3}$ в этих формулах, а также тот факт, что с ростом параметра $k$ в константах вложения растет и степень многочлена, значения которого в точках максимума надо вычислять.

Наша гипотеза заключается в том, что именно точка $t_1$ является точкой глобального максимума многочлена  $f_{n,5}$, а значит и функции $A^2_{n,5}$. Чтобы это доказать, опираясь на методы работы \cite{Kalyab} и раздела \ref{sub:An3} данной работы, достаточно показать, что при всех $n\geqslant 6$ выполнены неравенства
$$
\dfrac{A^2_{n,5}(t_1)}{A^2_{n,5}(t_2)}>1,\qquad \dfrac{A^2_{n,5}(t_1)}{A^2_{n,5}(t_3)}>1.
$$

Для любой функции вида $f:=t^{m}\cdot p(t)$, где $p$ --- некоторая дифференцируемая функция, $m\in \mathbb{N}$ ее производная равна
$$
f'=mt^{m-1}p+t^mp'=t^{m-1}(mp+tp').
$$
На любом промежутке, не содержащем ноль, $f'=0$ $\Leftrightarrow$ $mp+tp'=0$. Стало быть,
если $t^*$ произвольный  нуль производной $f$, то для него выполнено соотношение
$$
p(t^*)=-\dfrac{t^*p'(t^*)}{m}.
$$

Так как функция  $A^2_{n,5}$ имеет вид:
$$
A^2_{n,5}=-t^{2n-11}\cdot p_5\cdot Const,
$$
где $p_5$ --- многочлен пятой степени, определенный в \eqref{mult:An5},
то для   функции $A^2_{n,5}$ в нулях ее производной выполнено соотношение
\begin{equation}\label{eq:relA}
\dfrac{A^2_{n,5}(t_i)}{A^2_{n,5}(t_j)}=\dfrac{(-t_i)^{2n-11}}{(-t_j)^{2n-11}}\cdot\dfrac{p_5(t_i)}{p_5(t_j)}=
\dfrac{(t_i)^{2n-10}}{(t_j)^{2n-10}}\cdot\dfrac{p'_5(t_i)}{p'_5(t_j)}.
\end{equation}
Заметим, что поскольку $A^2_{n,5}>0$ ($\Leftrightarrow$ $p_5>0$ на промежутке $(-\frac14;0)$), то
$p'_5(t^*)>0$ (т.е. во всех точках $t_i$).

\subsubsection{Экспоненциальная часть отношения \eqref{eq:relA} $t_i=t_1$, $t_j=2$.}\label{sub:exp12}

Выражение
$$
\dfrac{t_1}{t_2}=\dfrac{t_2+(t_1-t_2)}{t_2}=1+\dfrac{t_1-t_2}{t_2}
$$
можно привести к виду
$$
\dfrac{a+b\cos({\varphi}/{3}-{\pi}/{3})}{a+b\cos({\varphi}/{3}+{\pi}/{3})}=
1+\dfrac{\sqrt 3 b\sin\frac{\varphi}{3}}{a+b\cos(\varphi/3+\pi/3)},
$$
где
$$
a=a_n:=\dfrac{n-3}{2(2n-1)}\nearrow \frac14,\qquad
b=b_n:=\dfrac{\sqrt{5(n-3)}}{(2n-1)\sqrt{2n-3}}\searrow 0 \text{ при } n\to\infty.
$$
Несложно проверить, что
$$
b_n\sim\frac{\sqrt5}{2\sqrt 2}\frac1n,\quad \sin\frac{\varphi}{3}\searrow\sin\left(\frac13\arcsin\sqrt{\frac35}\right) \text{ при }n\to\infty.
$$

Тогда
$$
\left(\dfrac{t_1}{t_2}\right)^{2n-10}\nearrow e^{2\sqrt{30}\sin(1/3\arcsin\sqrt{3/5})}\geqslant 24.
$$

\subsubsection{Отношение $\dfrac{p'_5(t_i)}{p'_5(t_j)}$.}

Определим
\begin{multline*}
q(t):=\dfrac{p'_5(t)}{5(2n-5)(2n-11)}=
16(2n-1)(2n-3)^2(2n-5)t^4+64(n-1)(n-3)(2n-3)(2n-5)t^3+\\+
24(n-2)(n-3)(2n-3)(2n-7)t^2+
16(n-2)(n-3)^2(n-4)t+(n-3)^2(n-4)^2.
\end{multline*}
Многочлен $q$ раскладывается в произведение двух квадратных трехчленов:
$q=q_1\cdot q_2$, где
\begin{gather*}
q_1(t)=4(2n-3)(2n-5)t^2+4(n-3)(2n-5)t+(n-3)(n-4),\\
q_2(t)=4(2n-1)(2n-3)t^2+4(n-3)(2n-3)t+(n-3)(n-4).
\end{gather*}
Обозначим корни $q_1$ через $w_{1,2}$, а нули $q_2$ через $z_{1,2}$:
\begin{gather*}
w_1:=-\dfrac{n-3}{2(2n-3)}-\dfrac{\sqrt{3(n-3)}}{2(2n-3)\sqrt{2n-5}}\qquad
w_2:=-\dfrac{n-3}{2(2n-3)}+\dfrac{\sqrt{3(n-3)}}{2(2n-3)\sqrt{2n-5}},\\
z_1:=-\dfrac{n-3}{2(2n-1)}-\dfrac{\sqrt{5(n-3)}}{2(2n-1)\sqrt{2n-3}}\qquad
z_2:=-\dfrac{n-3}{2(2n-1)}+\dfrac{\sqrt{5(n-3)}}{2(2n-1)\sqrt{2n-3}}.
\end{gather*}
Покажем, что всегда выполнены неравенства
$$
w_1<z_1<w_2<z_2.
$$

В этом можно убедиться, вычислив
\begin{gather*}
q_2(w_2)=-\dfrac{4(n-3)(-3+\sqrt{3(n-3)(2n-5)})}{(2n-3)(2n-5)}<0,\\
q_1(z_1)=\dfrac{4(n-3)\left[(2n-3)(2n-11)-(2n-5)\sqrt{5(n-3)(2n-3)}\right]}{(2n-1)^2(2n-3)}<0.
\end{gather*}
Последнее неравенство выполнено поскольку
$$
(2n-3)(2n-11)<(2n-5)\sqrt{5(n-3)(2n-3)} \Leftrightarrow -3(n-4)(2n-1)^2<0 \text{ при } n\geqslant 6.
$$

Заметим, что поскольку $A^2_{n,5}>0$ ($\Leftrightarrow$ $p_5>0$ на промежутке $(-\frac14;0)$), то
$p'_5>0$ во всех точках $t_i$).

Тогда из соображения неравенств для $t_i$ и расстановки знаков, следует (см. рис.), что
$$
t_1<w_1<z_1<t_2<w_2<z_2<t_3.
$$

\begin{picture}(230,130)
\put(0,40){\vector(1,0){230}}\put(228,30){$t$}
\qbezier(20,50)(115,0)(200,80)
\put(20,55){$q_1$}
\qbezier(20,75)(120,-20)(200,80)
\put(20,80){$q_2$}
\put(30,35){$_{w_1}$}
\put(40,40){\circle*{3}}
\put(70,45){$_{z_1}$}
\put(68,40){\circle*{3}}
\put(128,46){$_{w_2}$}
\put(139,40){\circle*{3}}
\put(152,35){$_{z_2}$}
\put(155,40){\circle*{3}}
\put(195,25){$0$}
\multiput(200,35)(0,7){8}{\line(0,1){3}}
\put(12,35){$_{t_1}$}
\put(12,40){\circle*{3}}
\put(90,45){$_{t_2}$}
\put(90,40){\circle*{3}}
\put(170,45){$_{t_3}$}
\put(170,40){\circle*{3}}
\end{picture}

Обозначим вершины квадратных трехчленов $q_1$ и $q_2$ соответственно
$$
w_0:=-\dfrac{n-3}{2(2n-3)},\qquad z_0:=-\dfrac{n-3}{2(2n-1)}.
$$
Непосредственной подстановкой проверяется, что $q_1(w_0)=q_2(w_0)$.

Заметим, что
$$
|q_1(t_2)|\leqslant |q_1(w_0)|=\dfrac{3(n-3)}{2n-3}\nearrow \dfrac32,\qquad
|q_2(t_2)|\leqslant |q_2(z_0)|=\dfrac{5(n-3)}{2n-1} \nearrow \dfrac52.
$$

\subsubsection{Отношение $\frac{q_2(t_1)}{q_2(t_2)}$.}\label{sub:t1t2}

Перепишем многочлен $q_2$ в виде
$$
q_2(t)=4(2n-1)(2n-3)(t-z_0)^2-\dfrac{5(n-3)}{2n-1},\quad z_0=-\dfrac{(n-3)}{2(2n-1)}.
$$
Заметим, что
$$
t_i=z_0-b\cos\varphi_i,\quad b=\dfrac{\sqrt{5(n-3)}}{(2n-1)\sqrt{2n-3}},\quad
\varphi_1=\frac{\varphi}{3}-\frac{\pi}{3}, \quad \varphi_2=\frac{\varphi}{3}+\frac{\pi}{3},
\quad \varphi_3=\frac{\varphi}{3}-\pi.
$$
поэтому
$$
q_2(t_i)=4(2n-1)(2n-3)b^2\cos^2\varphi_i+10z_0,\quad b^2=\dfrac{5(n-3)}{(2n-1)^2(2n-3)}.
$$
Подставим, получим
$$
q_2(t_i)=\dfrac{20(n-3)}{2n-1}\cos^2\varphi_i-\dfrac{5(n-3)}{2n-1}=\dfrac{5(n-3)}{2n-1}(4\cos^2\varphi_i-1).
$$

Тогда
$$
\dfrac{q_2(t_1)}{\left|q_2(t_2)\right|}=\dfrac{4\cos^2\varphi_1-1}{1-4\cos^2\varphi_2}.
$$

Исследуем производную:
$$
\dfrac{d}{dn}\dfrac{4\cos^2\varphi_1-1}{1-4\cos^2\varphi_2}=
\dfrac{-8\cos\varphi_1\sin\varphi_1\frac13\varphi'(1-4\cos^2\varphi_2)-
(8\cos\varphi_2\sin\varphi_2\frac13\varphi')(4\cos^2\varphi_1-1)}{(1-4\cos^2\varphi_2)^2}.
$$
Исследуем знак числителя. Одна группа слагаемых имеет вид
$$
8\cos\varphi_2\sin\varphi_2\frac13\varphi'-8\cos\varphi_1\sin\varphi_1\frac13\varphi'=
\dfrac{8\varphi'}{3}\sin(\varphi_2-\varphi_1)\cos(\varphi_2+\varphi_1)=\dfrac{4\sqrt 3\varphi'}{3}\cos\frac{2\varphi}{3}.
$$
Другая группа слагаемых:
\begin{multline*}
\dfrac{32\varphi'\cos\varphi_1\cos\varphi_2}{3}(\sin\varphi_1\cos\varphi_2-\sin\varphi_2\cos\varphi_1)=
\dfrac{16\varphi'\left[\cos(\varphi_1+\varphi_2)+\cos(\varphi_1-\varphi_2)\right]}{3}\sin(\varphi_1-\varphi_2)=\\
=-\dfrac{8\sqrt3\varphi'\left[\cos\frac{2\varphi}{3}-\frac12\right]}{3}.
\end{multline*}

Собрав все вместе, получаем
$$
\dfrac{4\sqrt3\varphi'}{3}(1-\cos\frac{2\varphi}{3})<0,
$$
поскольку
$$
\varphi'=-\dfrac{\sqrt 3(6n-17)}{2(n-3)(2n-5)\sqrt{(n-4)(2n-3)}}<0.
$$

Следовательно,
$$
\dfrac{q_2(t_1)}{\left|q_2(t_2)\right|}\searrow \text{ при } n\to\infty.
$$

Точнее
$$
\dfrac{q_2(t_1)}{\left|q_2(t_2)\right|}\searrow
\dfrac{4\cos^2(\frac{\arccos\sqrt{\frac25}-\pi}{3})-1} {1-4\cos^2(\frac{\arccos\sqrt{\frac25}+\pi}{3})}
\geqslant 1.42.
$$

\subsubsection{Отношение $\frac{q_1(t_1)}{q_1(t_2)}$.}\label{sub:q12}

Мы уже указывали, что $|q_1(t_2)|\leqslant |q_1(w_0)|=\dfrac{3(n-3)}{2n-3}\nearrow \dfrac32$. Определим поведение $q_1(t_1)$. Величину $q_1(t_1)$ можно представить в виде
\begin{multline*}
q_1(t_1)=\dfrac{20(n-3)(2n-5)}{2n-1)^2}\cos^2\varphi_1-
\dfrac{8(n-3)(2n-5)\sqrt{5(n-3)}}{(2n-1)^2\sqrt{2n-3}}\cos\varphi_1-
\dfrac{(n-3)(2n+13)}{(2n-1)^2}=\\=
\dfrac{(n-3)(2n-5)}{2n-1)^2}\left[20\cos^2\varphi_1-
\dfrac{8\sqrt{5(n-3)}}{\sqrt{2n-3}}\cos\varphi_1-\dfrac{2n+19}{2n-5}\right].
\end{multline*}

Множитель $\dfrac{(n-3)(2n-5)}{(2n-1)^2}$ есть возрастающая функция при $n\to+\infty$. Рассмотрим функцию, стоящую в квадратных скобках
$$
r(n):=20\cos^2\varphi_1-
\dfrac{8\sqrt{5(n-3)}}{\sqrt{2n-3}}\cos\varphi_1-\dfrac{2n+19}{2n-5}.
$$
Ее производная (по переменной $n$) имеет вид
$$
r'(n)=-40\sin\varphi_1\cos\varphi_1\frac{\varphi'}{3}+
\frac{8\sqrt{5(n-3)}}{\sqrt{2n-3}}\sin\varphi_1\frac{\varphi'}{3}+
\frac{12\sqrt{5}}{\sqrt{n-3}(2n-3)^{3/2}}\cos\varphi_1+\frac{48}{(2n-5)^2}.
$$

Сгруппировав слагаемые
$$
r'(n)=\left(\frac{48}{(2n-5)^2}-10\sin(2\varphi_1)\frac{\varphi'}{3}\right)+
\cos\varphi_1\left(\frac{12\sqrt{5}}{\sqrt{n-3}(2n-3)^{3/2}}-20\sin\varphi_1\frac{\varphi'}{3}\right)
+\dfrac{8\sqrt{5(n-3)}}{\sqrt{2n-3}}\sin\varphi_1\frac{\varphi'}{3}
$$
и учитывая оценки $|\sin\varphi_1|<\frac{\sqrt{2}}{2}$, а также явный вид $\varphi'$, получаем, что $r'(n)>0$, т.е. $q_1(t_1)$ возрастает как произведение двух положительных возрастающих функций.

Численные методы показывают, что $q_1(t_1)$ растет очень медленно при $n\to+\infty$ и принимает значения, близкие к нулю. Вычисления показывают, что при всех $n\geqslant 6$ выполнено $q_1(t_1)>0,2$.

Из полученных оценок и монотонности функций следует, при всех $n\geqslant 6$ выполнено
$$
\dfrac{q_1(t_1)q_2(t_1)}{q_1(t_2)q_2(t_2)}>\frac23\cdot 1,42\cdot 0,2>0,188.
$$

Экспоненциальная множитель $\left(\frac{t_1}{t_2}\right)^{2n-10}$ при $n\geqslant 8$ удовлетворяет неравенству
$\left(\frac{t_1}{t_2}\right)^{2n-10}>6$.

Таким образом, достаточно получить отношение $\dfrac{q_1(t_1)q_2(t_1)}{q_1(t_2)q_2(t_2)}$ при $n=6$ и $n=7$.

Имеем
$$
\left.\dfrac{q_1(t_1)q_2(t_1)}{q_1(t_2)q_2(t_2)}\right|_{n=6}>0,46,\quad
\left.\left(\frac{t_1}{t_2}\right)^{2n-10}\right|_{n=6}>2,8;\qquad
\left.\dfrac{q_1(t_1)q_2(t_1)}{q_1(t_2)q_2(t_2)}\right|_{n=7}>0,38,\quad
\left.\left(\frac{t_1}{t_2}\right)^{2n-10}\right|_{n=7}>4,6.
$$

Окончательно получаем, что  при всех $n\geqslant 6$ справедливо неравенство $\dfrac{A^2_{n,5}(t_1)}{A^2_{n,5}(t_2)}>1$.

Аналогично показывается, что  при $n\geqslant 6$   справедливо неравенство  $\dfrac{A^2_{n,5}(t_1)}{A^2_{n,5}(t_3)}>1$. Следующий пункт посвящен установлению этого неравенства.

\subsubsection{Экспоненциальная часть отношения \eqref{eq:relA} $t_i=t_1$, $t_j=3$.}

Аналогично вычислениям, проведенным в \ref{sub:exp12}, получаем
$$
\frac{t_1}{t_3}=1+\frac{b\sqrt{3}\cos\left(\frac{\varphi}{3}-\frac{\pi}{6}\right)}{a-b\cos(\varphi/3)}.
$$
Тогда
$$
\left(\frac{t_1}{t_3}\right)^{2n-10}\nearrow e^{2\sqrt{30}\cos\left(1/3\arcsin\sqrt{\frac35}-\pi/6\right)}\geqslant 43\,060.
$$

\subsubsection{Отношение $\frac{q_2(t_1)}{q_2(t_3)}$.}\label{sub:t1t3}

Аналогично вычислениям раздела \ref{sub:t1t2} получаем
$$
\frac{q_2(t_1)}{q_2(t_3)}=\frac{4\cos^2\varphi_1-1}{4\cos^2\varphi_3-1}.
$$

Производная этого отношения равна
$$
\frac{d}{dn}\frac{q_2(t_1)}{q_2(t_3)}=
\frac{4\sqrt{3}\varphi'\cos\left(\frac{2\varphi}{3}-\frac{\pi}{3}\right)}{3}<0.
$$

Следовательно
$$
\frac{q_2(t_1)}{q_2(t_3)}=\frac{4\cos^2\varphi_1-1}{4\cos^2\varphi_3-1}\searrow
\dfrac{4\cos^2(\frac{\arccos\sqrt{\frac25}-\pi}{3})-1} {4\cos^2(\frac{\arccos\sqrt{\frac25}}{3})-1}\geqslant 0,42.
$$

\subsubsection{Отношение $\frac{q_1(t_1)}{q_1(t_3)}$.}\label{sub:q1t1t3}

Поведение $q_1(t_1)$ при $n\to\infty$ мы уже оценивали в п. \ref{sub:q12}. Рассмотрим теперь $q_1(t_3)$. Изучение этого значения оказалось непростой задачей. Мы исследуем более простую величину. В п. \ref{sub:exp12}.
были получены выражения для $t_i$ через величины $a$, $b$ и $\cos\varphi_i$, $i=1,2,3$.
Так как $0<\cos\varphi_3<1$, то
$$
q_1(t_3)<q_1(-a+b)=\dfrac{n-3}{(2n-1)^2}\left(\dfrac{8(2n-5)\sqrt{5(n-3)}}{\sqrt{2n-3}}+38n-119\right).
$$
Определение величин $a$ и $b$ дано в п. \ref{sub:exp12}. Вычислим производную $q_1(-a+b)$ по $n$:
$$
\frac{dq_1(-a+b)}{dn}=\frac{5(78n-239)}{(2n-1)^3}+
\frac{4 \sqrt{5(n-3)}(84n^2-340n+309)}{(2n-1)^3(2n-3)^{3/2}}\geqslant 0\text{ при }n\geqslant 6.
$$

Так как $\lim\limits_{n\to\infty}q_1(-a+b)=4\sqrt{2,5}+19/2<16$, то при всех
$n\geqslant 6$ выполнено неравенство $q_1(t_3)<16$.

Таким образом, при всех $n\geqslant 6$ справедлива оценка
$$
\frac{q_1(t_1)q_2(t_1)}{q_1(t_3)q_2(t_3)}>\frac{0,2\cdot 0,42}{16}=0,00525.
$$
Экспоненциальный множитель оценивается следующим образом
$$
\left.\left(\frac{t_1}{t_3}\right)^{2n-10}\right|_{n=6}>52;\quad \left.\left(\frac{t_1}{t_3}\right)^{2n-10}\right|_{n=7}>251
$$
и этих оценок хватает, чтобы прийти к выводу, что при $n\geqslant 7$ верно
$$
\frac{A^2_{n,5}(t_1)}{A^2_{n,5}(t_3)}>1.
$$
Для $n=6$ используемая оценка $q_1(t_3)$ недостаточна. Более точные вычисления показывают, что верно неравенство
$$
\left.\frac{q_1(t_1)q_2(t_1)}{q_1(t_3)q_2(t_3)}\right|_{n=6}>0,5,
$$
откуда следует, что
при $n\geqslant 6$ верно
$$
\frac{A^2_{n,5}(t_1)}{A^2_{n,5}(t_3)}>1.
$$
Таким образом справедлива теорема

\begin{thm}
Точка $t_1$ является точкой глобального максимума многочлена  $f_{n,5}$, а значит и функции $A^2_{n,5}$.
\end{thm}

\begin{rim}
В работе \cite{NazM} при вычислении $A^2_{n,2}$  и $A^2_{n,4}$ на отрезке $[-1;1]$ авторы также столкнулись со сравнением $A^2_{n,k}(0)$ со значением в другой точке максимума $t^*$. При этом  ошибочно предполагалось, что на некоторых промежутках выполнено неравенство
$$
\dfrac{p'_4(0)}{|p'_4(t^*)|}>1,
$$
что неверно.
Многочлен $p'_4$, определенный в работе \cite{NazM}, является аналогом многочлена $p'_5$ (см. \eqref{eq:relA}), но соответствует функции $A^2_{n,4}(t)$.
\end{rim}

\subsection{Точное значение константы вложения $\Lambda^2_{n,5}$}

Введем обозначение $B:=2\sqrt{5(n-3)}\cos \varphi_1+(n-3)\sqrt{2n-3}$. Тогда $f_{n,5}(t_1)$ равно
\begin{multline*}
\dfrac{1}{2^{2n-11}} \left(\dfrac{B}{(2n-1)\sqrt{2n-3}}\right)^{2n-11}\left[(n-3)^2 (n-4)^2(n-5)^2+\right.\\+
\dfrac{5(n-3)^2(n-4)^2(2n-5)(2n-11)\cdot B}{2(2n-1)\sqrt{2n-3}}-
\dfrac{10(n-2)(n-3)^2(n-4)(2n-5)(2n-11)\cdot B^2}{(2n-1)^2(2n-3)}+\\+
\dfrac{10(n-2)^2(n-3)(2n-5)(2n-7)(2n-11)\cdot B^3}{(2n-1)^3(2n-3)\sqrt{2n-3}}-
\dfrac{5(n-1)(n-3)(2n-5)^2(2n-11)\cdot B^4}{(2n-1)^4(2n-3)}+\\+\left.
\dfrac{(2n-5)^2\cdot B^5}{2(2n-1)^4\sqrt{2n-3}}\right].
\end{multline*}

Точная константа вложения $\Lambda^2_{n,5}$ соответственно равна
$$
\Lambda^2_{n,5}=\frac{f_{n,5}(t_1)}{((n-3)!)^2(2n-11)}.
$$

Вычисления констант $\Lambda_{n,3}$ и $\Lambda_{n,5}$, а также результаты работ \cite{Kalyab}, \cite{NazM} позволяют сформулировать следующую гипотезу.
\begin{hyp}
Функция $A^2_{n,k}(t)$ по переменной $t\in\left[-\frac14;0\right]$ имеет $\left\lceil\frac{k+1}{2}\right\rceil$ точек максимума.
Глобальный максимум величины $A^2_{n,k}(t)$  принимается в точке максимума, ближайшей к $-\frac14$. Для четных $k$ ближайшей такой точкой является сама точка $-\frac14$. В переменной $a\in [0;1]$ этому значению отвечает $a=\frac12$, а на отрезке $[-1;1]$  $a=0$ соответственно.
\end{hyp}

\section{Краевая задача, связанная с константами вложения.}\label{sec:BP}
В этом разделе мы установим связь констант вложения с некоторым классом спектральных задач и укажем способ быстрого пересчета точных значений констант вложения определенных на отрезках $[-1;1]$ и $[0;1]$ соответственно.

Зафиксируем на отрезке $[0;1]$ произвольную точку $a$ и рассмотрим следующую краевую задачу
\begin{gather}\label{eq:krzad1}
(-1)^ny^{(2n)}=\lambda (-1)^k y^{(k)}(a)\delta^{(k)}(x-a),\\\label{eq:krzad2}
y^{(j)}(0)=y^{(j)}(1)=0,\quad j=0,1,\ldots,n-1.
\end{gather}
Правую часть \eqref{eq:krzad1} можно  также представить в виде $\lambda\langle\delta^{(k)}(x-a),y\rangle \delta^{(k)}(x-a)$.

Операторной моделью для этой задачи служит пучок операторов $T_a:\; \Wo^{n}_2[0;1]\to \Wo^{-n}_2[0;1]$, квадратичная форма которого удовлетворяет соотношению
$$
\forall y\in W^{n}_2[0;1]\quad \langle T_a y,y\rangle:=\int_0^1|y^{(n)}(x)|^2\,dx-\lambda |y^{(k)}(a)|^2=0.
$$
Рассмотрим задачу о минимизации собственного значения задачи \eqref{eq:krzad1}--\eqref{eq:krzad2} по параметру $a$. Из определения пучка $T_a$ следует, что это наименьшее собственное значение соответствует пучку $T_{1/2}$
и в точности равно $(A^2_{n,k})^{-1}$.

Это наблюдение позволяет легко установить связь между константами вложения $A^2_{n,k,[-1;1]}$, определенными на отрезке $[-1;1]$ и $A^2_{n,k,[0;1]}$ --- константами вложения, определенными на отрезке $[0;1]$.

Действительно, сделаем замену переменных $s=2x-1$, осуществляющую преобразование отрезка $[0;1]$ на отрезок $[-1;1]$. Обозначим $z(s):=y\left(\frac{s+1}{2}\right)=y(x)$, с учетом того, что $ds=2dx$, получим, что пучок $T_a$ перейдет в пучок $\tilde T_{2a-1}$, квадратичная форма которого на отрезке $[-1;1]$ удовлетворяет соотношению
$$
\forall z\in W^{n}_2[-1;1]\quad \langle \tilde T_{2a-1} z,z\rangle:=2^{2n-1}\int_{-1}^1|z^{(n)}(s)|^2\,ds-\lambda\cdot  2^{2k}|z^{(k)}(2a-1)|^2=0.
$$

Следовательно, если $\lambda$ есть собственное значение пучка $T_a$ на отрезке $[0;1]$ с собственной функцией $y$, то $\frac{\lambda}{2^{2n-2k-1}}$ есть собственное значение пучка $\tilde T_{2a-1}$ на отрезке $[-1;1]$ с собственной функцией $z$. Отсюда следует, что
$$
A^2_{n,k,[-1;1]}(a)=2^{2n-2k-1}A^2_{n,k,[0;1]}(2a-1).
$$

Частности то же соотношение верно и для констант вложения:
$$
\Lambda^2_{n,k,[-1;1]}=2^{2n-2k-1}\Lambda^2_{n,k,[0;1]}.
$$

\end{document}